\documentclass[12pt]{article}
\setlength{\parskip}{0.1in}
\usepackage{amsfonts}
\usepackage{amsmath,color,amsthm,amssymb,cite,bbm,mathrsfs}
\usepackage{epsfig}
\usepackage{graphicx}
\usepackage{latexsym}          
\usepackage{tikz}

\usepackage{color}
\title{Contributions to the Theory of de Bruijn Cycles}
\author {Andr\'e Alexander Campbell \& Anant P.~Godbole\\
Department of Mathematics and Statistics\\
East Tennessee State University\and Bill Kay\\
Department  of Mathematics and Computer Science\\
Emory University}
\begin{document}
\def\qed{\vbox{\hrule\hbox{\vrule\kern3pt\vbox{\kern6pt}\kern3pt\vrule}\hrule}}
\def\ms{\medskip}
\def\n{\noindent}
\def\ep{\varepsilon}
\def\G{\Gamma}
\def\lr{\left(}
\def\ls{\left[}
\def\rs{\right]}
\def\lf{\lfloor}
\def\rf{\rfloor}
\def\lg{{\rm lg}}
\def\lc{\left\{}
\def\rc{\right\}}
\def\rr{\right)}
\def\ph{\varphi}
\def\p{\mathbb P}
\def\nk{n \choose k}
\def\ca{{\cal A}}
\def\cb{{\cal B}}
\def\cw{{\cal W}}
\def\s{\cal S}
\def\e{\mathbb E}
\newtheorem{thm}{Theorem}
\newtheorem{con}{Conjecture}
\newtheorem{claim}[thm]{Claim}
\newtheorem{definition}[thm]{Definition}
\newtheorem{lem}[thm]{Lemma}
\newtheorem{cor}[thm]{Corollary}
\newtheorem{remark}[thm]{Remark}
\newtheorem{prp}[thm]{Proposition}
\newtheorem{ex}[thm]{Example}
\newtheorem{eq}[thm]{equation}
\newtheorem{que}{Problem}[section]
\newtheorem{ques}[thm]{Question}
\providecommand{\floor}[1]{\left\lfloor#1\right\rfloor}
\maketitle
\begin{abstract}  A de Bruijn cycle is a cyclic listing of length $\ca$, of a collection of $\ca$ combinatorial objects, so that each object appears exactly once as a set of consecutive elements in the cycle.  In this paper, we show the power of de Bruijn's original theorem, namely that the cycles bearing his name exist for $n$-letter words on a $k$-letter alphabet for all values of $k,n$, to prove that we can create de Bruijn cycles for the assignment of elements of $[n]=\{1,2,\ldots,n\}$ to the sets in any labeled subposet of the Boolean lattice; de Bruijn's theorem corresponds to the case when the subposet in question consists of a single ground element.  The landmark work of \cite{cdg} extended the agenda of finding de Bruijn cycles to possibly the next most natural set of combinatorial objects, namely $k$-subsets of $[n]$.  In this area, important contributions have been those of \cite{h} and \cite{r}.  Here we follow the direction of \cite{bg}, who proved that, in a suitable encoding, de Bruijn cycles can be created for the subsets of $[n]$ of size in the interval $[s,t]; 0\le s<t\le n$.  In this paper we generalize this result to exhibit de Bruijn cycles for words with weight between $s$ and $t$, where these parameters are suitably restricted.
\end{abstract}
\section{Introduction}   A de Bruijn cycle  is a cyclic listing of length $\ca$, of a collection of $\ca$ combinatorial objects, so that each object appears exactly once as a set of consecutive elements in the cycle.  For example, the cyclic list 11101000 encodes each of the eight binary three letter words so that each appears exactly once.  Viewed differently, and using a different coding, we see that the same list encodes all the 8 subsets of $[3]=\{1,2,3\}$, with the convention that, e.g., the string 101 represents the subset $\{1,3\}$ -- whose characteristic vector it happens to be.  de Bruijn's theorem states that this example is not an anomaly, and that de Bruijn cycles exist for the collection of $n$-letter words on the $k$-letter alphabet $\{0,1,\ldots,k-1\}$, and thus, using a different coding, for all the multisets of $[n]=\{1,2,\ldots,n\}$ where each element may appear at most $k-1$ times in a multiset.  {\it A de Bruijn cycle is most often called a Universal Cycle (or U-cycle) in general contexts, but we shall not do so in this paper.}

Consider the poset $\cb=\cb_n$, the so-called Boolean Lattice of all subsets of $[n]$ ordered by inclusion.  A {\it subposet} of $\cb$ is defined in this paper to be a collection of subsets of $\cb$ with the same Hasse diagram that is, moreover, consistent with the Hasse diagram of $\cb$.  For example, the W-poset or a 4-chain are each  consistent with $\cb_3$ but not with $\cb_2$.  In Section 2 of this paper, we show the versatility of de Bruijn's original theorem by showing that we can create de Bruijn cycles for the assignment of elements of $[n]=\{1,2,\ldots,n\}$ to the sets in {\it any labeled} subposet of the Boolean lattice; de Bruijn's theorem corresponds to the case when the subposet in question consists of a single ground element.  This result is equivalent to showing that de Bruijn cycles exist for the possible ways of filling in the elements of a collection of sets, labeled $A,B,C$, etc., whose Venn Diagram has a shape with specified intersection patterns.

The landmark work of \cite{cdg} extended the agenda of finding de Bruijn cycles to possibly the next most natural set of combinatorial objects, namely $k$-subsets of $[n]$.  Here the authors conjectured that de Bruijn cycles for the $k$-subsets of $[n]$ existed if and only if $n$ was sufficiently large and $n\big|{n\choose k}$, which is an obvious necessary condition for the existence of the cycle in the natural coding (where, e.g., the set $\{a,b,c\}$ is encoded in the string by some permutation of its elements) since in this case each element of the set must appear equally often.  In this area, important contributions have been those of \cite{h} and \cite{r}.  In \cite{h}, Hurlbert makes significant progress for the case of $k=3,4,6$; the case of $k=4$ is reduced to verification of two ground cases in \cite{r}.  Here we follow the direction of \cite{bg}, who proved that, in a suitable encoding, de Bruijn cycles can be created for the subsets of $[n]$ of size in the interval $[s,t]; 0\le s<t\le n$.  For example, the string 1110011010 is a de Bruijn cycle of all the 2- and 3-subsets of [4] if one uses a sliding window of length 4 and the characteristic vector coding for subsets.  Once again, we remind the reader that setting $s=0$ and $t=n$ and using a full (length $n$) encoding window for a subset yields de Bruijn's theorem!  Thus the result in \cite{bg} can be viewed as a restricted version of de Bruijn's theorem.  In this paper we generalize this result to exhibit de Bruijn cycles for words of length $n$ over a $k$-ary alphabet and having weight between $s$ and $t$, where the parameters $s$ and $t$ are suitably restricted.  Viewed differently, this proves existence of de Bruijn cycles for multisets having between $s$ and $t$ elements, and where each element of $[n]$ may appear at most $k-1$ times in the multiset.  Setting $k=2$ enables us to retrieve the result in \cite{bg} on subsets of cardinality between $s$ and $t$.  Results for de Bruijn cycles of multisets using the ``classical" coding may be found in \cite{hjz}.

A good survey of some recent and not-so-recent papers on de Bruijn cycles may be found in \cite{hh}.  See also \cite{bks}, where de Bruijn cycles are proved to exist for graphs, hypergraphs, and such -- using the simple and far-reaching device of invoking de Bruijn's theorem, where it all began, together with using a suitable encoding.  And, last but not least, the reader is urged to study the very recent \cite{dg} and order it for his/her library.

\section{Labeled Subposets of the Boolean Lattice, a.k.a.~Venn Diagrams with Specified Intersection Patterns} A poset $(P,\le)$ is a set $P$ with a relation $\le$ on $P$ that is reflexive, transitive, and antisymmetric.  The Hasse diagram of a poset is a representation of $(P,\le)$ using a pictorial device that represents its {\it transitive reduction}.  Specifically, an ``upward edge" is drawn from $x$ to $y$ if $x<y$ and there is no $z$ such that $x<z<y$. The main result of this section is the following:
\begin{thm} Let $(P,\le)$ be a fixed poset with elements of $P$ being subsets of $[n]$ and $A\le B$ if $A\subseteq B$.  Assume furthermore that the Hasse diagram of $P$ is consistent with that of the Boolean Lattice $\cb_n$.  There are then $\alpha^n$ assignments of the elements of $[n]$ to the elements of $P$, where $\alpha$ is the number of antichains in $P$.  Furthermore, there exists a de Bruijn cycle of these assignments.\end{thm}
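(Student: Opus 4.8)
The plan is to reduce the statement to a direct application of de Bruijn's original theorem by exhibiting a bijection between the assignments in question and the set of all $n$-letter words over an $\alpha$-letter alphabet.

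First I would analyze a single element. Fix $i\in[n]$ and, for any assignment, let $F_i\subseteq P$ denote the collection of nodes of $P$ to which $i$ is assigned (equivalently, the sets in the realized labeling that contain $i$). Consistency with the inclusion order forces $F_i$ to be an up-set (filter) of $P$: if $A\le B$ in $P$ and $i$ is placed in $A$, then $i\in A\subseteq B$, so $i$ must also be placed in $B$. Conversely, every up-set is a legitimate choice for $F_i$, and because the assignments of distinct elements are mutually unconstrained, an assignment is precisely a choice of an $n$-tuple $(F_1,\ldots,F_n)$ of up-sets of $P$.

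Next I would invoke the standard correspondence between up-sets and antichains in a finite poset: an up-set is determined by its set of minimal elements, which form an antichain, and every antichain generates a unique up-set as its upward closure. Hence the number of up-sets of $P$ equals $\alpha$, the number of antichains, and the number of assignments is $\alpha^n$, establishing the first claim. Fixing any enumeration of the up-sets by the symbols $\{0,1,\ldots,\alpha-1\}$ then yields a bijection between assignments and words $w_1w_2\cdots w_n\in\{0,1,\ldots,\alpha-1\}^n$, where $w_i$ names the up-set $F_i$ assigned to element $i$.

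Finally, I would apply de Bruijn's theorem to the alphabet $\{0,1,\ldots,\alpha-1\}$ and word length $n$: it produces a cyclic string of length $\alpha^n$ in which every $n$-letter word occurs exactly once inside a length-$n$ sliding window. Reading each window through the bijection above---the letter in position $j$ decoding the up-set assigned to element $j$---converts this cycle into a de Bruijn cycle for the assignments, as required. The only real ingenuity lies in the encoding: once one sees that the problem \emph{factors} into $n$ independent per-element choices, each drawn from an $\alpha$-symbol alphabet, de Bruijn's theorem supplies the cycle with no further obstacle. The lone point needing care is confirming that every window decodes to a legitimate assignment, and this is immediate precisely because there are no cross-element constraints.
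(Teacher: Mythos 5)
Your proposal is correct and follows essentially the same route as the paper: factor the assignment into $n$ independent per-element choices, identify each choice with an up-set of $P$, use the up-set/antichain bijection (which the paper proves explicitly via minimal elements and upward closures, where you cite it as standard) to count $\alpha$ choices per element, and then invoke de Bruijn's theorem on an $\alpha$-letter alphabet to produce the cycle. No gaps; the only difference is that the paper spells out the injectivity and surjectivity of the antichain correspondence in detail while you appeal to the standard fact.
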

\begin{proof} We start with the first part of the result.  Given any element $j\in[n]$ it is clear that if $j\in A$ for some $A\in P$, we must have $j\in B$ for each $B\ge A$, since $\le$ is the inclusion relation.  It thus follows that the number of ways to assign element $j$ to the sets in $P$ is equal to the number of ways of labeling the elements of $P$ with zeros and ones so that if $A$ is labeled by a 1, then so is each $B\supseteq A$; this includes the case where all the elements of $P$ are labeled with zeros.
We call a coloring $c:P\rightarrow\{0,1\}$ {\it unitarily up-closed} if $c(A)=1\Rightarrow c(B)=1$ for each $B\in P$ with $B\ge A$.  We will prove below that the set of unitarily up-closed colorings is in bijection with the antichains of $(P,\le)$; note that $\emptyset$ is considered to be an antichain in $P$, as are all one-element sets:

For one direction, let $c$ be a unitarily up-closed coloring on $(P,\le)$. Let $M_c := \{A\in P : c(A) = 1\ {\rm and\ }
c(B) = 0\ {\rm for\ every\ } B\le A\}$ i.e., the set of elements of $P$ minimal with respect to receiving the value 1 under
$c$. Clearly, $M_c$ forms an antichain, since if there exist $A,B\in M_c$ with $A\le B$, then $A$ does not meet the constructive criteria
of $M_c$. 
This provides a map from colorings to antichains. To see that the map is injective, let $c$ and $d$ be two
different unitarily up-closed colorings, and suppose that $M_c = M_d$. Fix $A\in P$. Suppose that $c(A) = 1$. If $A$ is
minimal, then we have that $d(A)$ = 1 since $M_c = M_d$. If $A$ is not minimal, it is above some $B\in M_c$. $B\in M_d$
by hypothesis, and since $B\le A$, $d(A) = 1$ as $d$ is unitarily up-closed.   Now suppose that $c(A) = 0$. Since $A$
is not above {\it any} element of $M_c$, $A$ is also not above any element of $M_d$. Hence $d(A) = 0$, as if $d(A) = 1$ we
would have that $A$ was above some minimal element under $d$.

To see the reverse, notice that for a fixed antichain $\ca$, we can define a unitarily closed up-coloring $c=c_\ca$
by the rule $c(A) = 1$ iff $B\le A$ for some $B\in\ca$, 0 otherwise. We claim that
$M_c = \ca$, and hence this provides a $1-1$ inverse to our $1-1$ function. Since no element below any element
of $\ca$ receives color 1, clearly $\ca\subseteq M_c$. On the other hand, every element $A$ with $c(A)=1$ is above some
element of $\ca$, and since $M_c$ contains every element of $\ca$, no other element receiving color 1 can belong to
$M_c$, and we are done.

It follows that the ``fate" of each element of $[n]$, i.e., which sets in $P$ it belongs to, can be determined in $\alpha$ ways, and thus there are $\alpha^n$ assignments of the elements of $[n]$ to the sets in $P$.

To prove the second part, we note that each of the $\alpha$ ways of assigning element $j$ to the sets in $P$ can be coded using letters from an alphabet $\Lambda$ of size $\alpha$.  By de Bruijn's theorem, there exists a cycle of the elements of $\Lambda^{[n]}$, and thus any such de Bruijn cycle may be viewed as a de Bruijn cycle of the assignment of elements of $[n]$ to the sets in $P$.  This completes the proof; specific examples follow.
\hfill\end{proof}

\medskip
\noindent {\bf Examples.}  The two letter words on a three letter alphabet $\{a,b,c\}$ may be cycled as follows:
\[cc \rightarrow ca \rightarrow aa \rightarrow ab \rightarrow bb \rightarrow bc \rightarrow cb \rightarrow ba \rightarrow ac.\]
We let the letters of the alphabet code for the ``allowable configurations" corresponding to the three antichains of the 2-chain with Hasse diagram
\begin{center}
\begin{tikzpicture}
\node (max) at (0,4) {$B$};
\node (b) at (0,3) {$A$};
\draw (max) -- (b);
\end{tikzpicture}
\end{center}
This is done by using a coding such as 
\[a={{0}\atop{0}}; \quad b={{1}\atop{0}}; \quad c={{1}\atop{1}},\]
where, e.g., ${{1}\atop{0}}$ indicates that $j\in B; j\not\in A$.
Thus the de Bruijn cycle $ccaabbcba$ above may be written as 
\[{{1}\atop{1}}\quad {{1}\atop{1}}\quad {{0}\atop{0}}\quad {{0}\atop{0}}\quad {{1}\atop{0}}\quad {{1}\atop{0}}\quad {{1}\atop{1}}\quad {{1}\atop{0}}\quad {{0}\atop{0}}\]
To decode the cycle, we read the above stack from left to right in sliding groups of two columns at a time, the first and second columns describing to which sets elements $1$ and $2$ belong.  Since $n=2$, this is all we need.
The actual de Bruijn cycle of assignments of $\{1,2\}$ to the two sets in the poset is then as follows:
\begin{center}
\begin{tikzpicture}
\node (max) at (0,4) {$ \{1,2\} $};
\node (b) at (0,3) {$ \{1,2\} $};
\draw (max) -- (b);
\end{tikzpicture} \begin{tikzpicture}
\node (max) at (0,4) {$ \{1\} $};
\node (b) at (0,3) {$ \{1\} $};
\draw (max) -- (b);
\end{tikzpicture} \begin{tikzpicture}
\node (max) at (0,4) {$ {\emptyset } $};
\node (b) at (0,3) {$ {\emptyset } $};
\draw (max) -- (b);
\end{tikzpicture} \begin{tikzpicture}
\node (max) at (0,4) {$ \{2\} $};
\node (b) at (0,3) {$ {\emptyset} $};
\draw (max) -- (b);
\end{tikzpicture} \begin{tikzpicture}
\node (max) at (0,4) {$ \{1,2\} $};
\node (b) at (0,3) {$ {\emptyset } $};
\draw (max) -- (b);
\end{tikzpicture} \begin{tikzpicture}
\node (max) at (0,4) {$ \{1,2\} $};
\node (b) at (0,3) {$ \{2\} $};
\draw (max) -- (b);
\end{tikzpicture} \begin{tikzpicture}
\node (max) at (0,4) {$ \{1,2\} $};
\node (b) at (0,3) {$ \{1\} $};
\draw (max) -- (b);
\end{tikzpicture} \begin{tikzpicture}
\node (max) at (0,4) {$ \{1\} $};
\node (b) at (0,3) {$ {\emptyset } $};
\draw (max) -- (b);
\end{tikzpicture} \begin{tikzpicture}
\node (max) at (0,4) {$ \{2\} $};
\node (b) at (0,3) {$ \{2\} $};
\draw (max) -- (b);
\end{tikzpicture}
\end{center}
The same may be done for a poset with any Hasse diagram.  If, e.g., we have the W-poset with 13 antichains, then 

(i) We start with a de Bruijn cycle for all the $n$-letter words on an alphabet such as $\{A,B,\ldots, M\}$.  

(ii) We then rewrite this as an assignment of the elements of $\{1,2\ldots,n\}$ to the five sets in the poset, using a sliding stack of height 5 and width $n$.  

(iii) Finally, we may ``draw" the $13^n$ realizations of the poset, i.e., the assignments of $n$ elements to 13 sets in a  Venn diagram.  

\noindent Note, however, that going from steps (i) to (ii) to (iii) leads to a sequential loss of data compression, and thus (i) gives the recommended de Bruijn cycle.  
\section{Words with Weights in Prescribed Intervals}  The main result of this section is the following:
\begin{thm}  Let $k,n\in{\mathbb Z}^+$.  Consider $n$ letter words $w=(w_1,w_2,\ldots,w_n)$ on the $k$-letter alphabet $\Lambda=\{0,1,\ldots,k-1\}$, and define the weight $h(w)$ of $w$ by $h(w)=\sum_{i=1}^nw_i$.  Let $s,t\in{\mathbb Z}^+$ satisfy $0\le s<s+k-1\le t\le n(k-1)$. Let $\cw$ be the collection of all such words with weights between $s$ and $t$. Then there exists a de Bruijn cycle of the elements of $\cw$.
\end{thm}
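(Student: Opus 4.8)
The plan is to realize $\cw$ as the edge set of a de Bruijn-type transition digraph and to produce the desired cycle as an Eulerian circuit, exactly as in de Bruijn's own proof. Let $G$ be the digraph whose vertices are the $(n-1)$-letter words over $\Lambda$ and whose edges are the words $w=(w_1,\dots,w_n)\in\cw$, where the edge $w$ is directed from the vertex $(w_1,\dots,w_{n-1})$ to the vertex $(w_2,\dots,w_n)$. An Eulerian circuit of $G$ reads off a cyclic string over $\Lambda$ in which every length-$n$ window is a distinct element of $\cw$ and every element of $\cw$ occurs exactly once; this is precisely a de Bruijn cycle of $\cw$. By Euler's theorem it therefore suffices to verify two things: that every vertex has equal in- and out-degree, and that all the edges lie in a single weakly connected component.

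The degree balance is automatic. For a vertex $u=(u_1,\dots,u_{n-1})$ of weight $r=h(u)$, the out-edges are the words $(u_1,\dots,u_{n-1},a)$ with $r+a\in[s,t]$ and the in-edges are the words $(a,u_1,\dots,u_{n-1})$ with $a+r\in[s,t]$; in both cases $a$ ranges over the set $\{c\in\Lambda: s-r\le c\le t-r\}$, so the in-degree and out-degree both equal the size of this set. In particular $u$ is incident to an edge precisely when $s-(k-1)\le r\le t$; I call such a vertex \emph{active} and discard the rest.

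The connectivity is the crux, and it is here that the hypothesis $s+k-1\le t$ enters. Say a weight $r$ lies in the \emph{free zone} if $r\in[s,t-k+1]$; the hypothesis makes this interval nonempty, and together with $t\le n(k-1)$ it forces $[s,t-k+1]\subseteq[0,(n-1)(k-1)]$, so that free-zone vertices actually exist. The point of the free zone is that from a vertex of free-zone weight \emph{every} letter $a\in\Lambda$ yields a legal edge, since then $s\le r+a\le(t-k+1)+(k-1)=t$. I would establish weak connectivity (which, given the balance, is enough) by showing that every active vertex is joined, through legal edges, to a single canonical free-zone word. The first and harder step is to steer the running weight into the free zone: from an active vertex of weight $r<s$ one is forced to append a large letter (any $a\ge s-r$ is legal, and such an $a\le k-1$ is available because $r\ge s-k+1$), and by first rotating a non-maximal letter to the front—which is a legal move while the weight lies below the free zone—one can make the weight strictly increase until it reaches the free zone; the case $r>t-k+1$ is handled symmetrically by expelling positive letters in favor of $0$'s. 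Once inside the free zone one has full freedom of appending, and a routine combination of single-letter swaps and cyclic rotations then moves the word to the chosen canonical representative. I expect the bookkeeping in this weight-steering step—in particular arranging to land exactly on a free-zone weight rather than overshooting when the free zone is narrow (the extreme case $t=s+k-1$, where it collapses to a single level and only rotations preserve weight) to be the main technical obstacle.

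With balance and connectivity in hand, Euler's theorem supplies an Eulerian circuit of $G$, hence a de Bruijn cycle of $\cw$, completing the proof. Specializing to $k=2$, the condition $s+k-1\le t$ becomes simply $s<t$, and the construction reduces to the interval result of \cite{bg}.
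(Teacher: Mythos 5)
Your framework is the same as the paper's: the identical transition digraph on $(n-1)$-letter words of weight in $[s-(k-1),t]$, the identical degree count (in- and out-degree at a vertex of weight $r$ both equal $\left|\{c\in\Lambda : s-r\le c\le t-r\}\right|$), and the identical reduction via Euler's theorem to connectivity. That much is correct. The genuine gap is in the connectivity argument, where all the content of the theorem lives, and it is concentrated in your sentence claiming that, once inside the free zone, ``a routine combination of single-letter swaps and cyclic rotations then moves the word to the chosen canonical representative.'' This step is not routine, and in the extremal case $t=s+k-1$ --- which the theorem must cover, since its hypothesis is exactly $s+k-1\le t$ --- the naive version of it breaks: the free zone is then the single weight level $s$, every swap that changes a letter exits it immediately, and once the running weight reaches an extreme the next move is forced (from a vertex of weight $s-(k-1)$ the only legal letter to append is $k-1$; from a vertex of weight $t$, only $0$). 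Simply overwriting the word with the letters of a canonical target fails because the intermediate vertex weights --- partial sums mixing old and new letters --- can hit these walls, after which the forced moves undo your progress. What is needed is a \emph{scheduled} normalization, and that is precisely the content of the paper's Lemmas 5 and 6: first convert the word to one using only the two letters $x$ and $x+1$ of the balanced weight-$s$ word, then sort those letters into place, in both phases alternating weight decreases with weight increases and obeying an explicit ``danger'' rule (never move to a vertex whose weight is within $x$ of either extreme $s-(k-1)$ or $t$; cycle instead until a safe corrective replacement is available). Nothing in your proposal plays the role of this scheduling argument.

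By contrast, the other difficulty you flag --- landing exactly in the free zone without overshooting --- is real but easily repaired, and corresponds to the paper's Lemmas 3 and 4: from a vertex of weight $r<s$ with front letter $u_1$, append the letter $s-r+u_1$ whenever it is at most $k-1$ (the edge then has weight $s+u_1\le t$ and the new vertex weight is exactly $s$), and otherwise append $k-1$, which raises the vertex weight without crossing $s$; rotations dispose of front letters equal to $k-1$, and the all-$(k-1)$ word cannot have weight below $s$ since $s\le t-(k-1)\le(n-1)(k-1)$. The symmetric rule (append $s-r+u_1$ when it is nonnegative, else append $0$) steers high-weight vertices down to exactly $s$. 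So your proposal reproduces the paper's skeleton and correctly isolates where the difficulty sits, but it leaves the crux --- connecting the weight-$s$ vertices to one another --- unproved.
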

\begin{proof}  We create a digraph $D=(V,E)$ with vertex set equal to the set of all $n-1$-letter words over the alphabet $\Lambda$ and having weights between $s-(k-1)$ and $t$.  A directed edge is drawn from vertex $v_1$ to vertex $v_2$ if (i) the last $n-2$ letters of $v_1$ are the same as the first $n-2$ letters of $v_2$ and if (ii) the edge labeled by the concatenation of the letters in $v_1$ and $v_2$ yields an $n$-letter word with weight between $s$ and $t$.

It is clear that the indegree $i(v)$ and outdegree $o(v)$ of any vertex $v$ are both equal to the number of prefixes (or suffixes) that yield an edge label of legal weight; if, e.g., $h(v)=s-(k-1)$ or $h(v)=t$, then $i(v)=o(v)=1$.  We will next show that $D$ has a single weakly connected component; this will show that it is Eulerian\cite{west}, and the Eulerian path will give the required de Bruijn cycle.  

Weak connectedness will established by showing that there exists a path between any vertex in $D$ and a special sink vertex $SV$ of weight $s$ that consists of a uniquely determined number $a\ge1$ of letters $x$ followed by $n-1-a\ge0$ letters that are all $x+1$.  The rest of the proof of Theorem 2 consists of  demonstrating four embedded lemmas, that state that we can (i) connect any vertex of weight $\ge s+1$ to one of weight $s$; (ii) connect any vertex of weight $\le s-1$ to one of weight $s$; (iii) connect any vertex of weight $s$ to one with weight $s$ and letters $x, x+1$; and (iv) connect any vertex with the right number of $x$s and $x+1$s to $SV$.
\begin{lem}  Let $v$ have weight $h(v)$ in the range $[s+1,t]$.  Then there is a path from $v$ to another vertex of weight $s$.
\end{lem}
\begin{proof} We first cycle any zeros in the front of the vertex to the end.  These steps may be taken without changing the weight of the vertex, and so that the weight of the words representing the edges is also maintained at $h(v)$.  The resulting vertex, $v'$, has weight $h(v')=h(v)$ that may be as high as $t$, and has first letter different from zero.  If $h(v')=t$, then the only allowable letter we can add, when the first letter is dropped, is zero.  This maintains the edge weight at $t$, while leading to the new vertex weight $h(v'')$ satisfying the conditions $t>h(v'')\ge t-(k-1)\ge s$. If $h(v')=t-r\ge s+1$, we replace the first letter $f$ by $\min\{f-1, r\}$.  This either reduces the vertex weight by one or changes it to $t-r-f+r=t-f\ge t-(k-1)\ge s$.  The edge weight might increase from $t-r$ to $t$, but this is OK.  We now see that $v''$ either has weight $s$ or, if not, has weight smaller than $h(v')$.  We repeat the above process until we reach a vertex with weight $s$. \hfill\end{proof}
\begin{lem}  Let $v$ have weight $h(v)$ in the range $[s-(k-1),s-1]$.  Then there is a path from $v$ to another vertex of weight $s$.
\end{lem}
\begin{proof}If $h(v)=s-(k-1)$, we drop the first letter and add the letter $k-1$.  This makes the edge label equal to $s$, and the vertex weight does not decrease.  If the vertex label has increased, we have made progress towards our target.  If it has not, due to the first letter in $v$ being $k-1$, we repeat the process till we get to a vertex with first letter smaller than $k-1$.  If the starting weight is $h(v)=s-r; 1\le r\le k-2$, we drop all letters $k-1$ at the front of the word and append $k-1$s to the end until the first letter is smaller than $k-1$, thus arriving at the vertex $v'$.    We now add the letter $\max\{r, f+1\}$, where $f$ is the first letter of the vertex $v'$.  This leads to the edge weight becoming either $s$ (if $r\ge f+1$) or $s-r+f+1\le t$ (if $r\le f+1$).  The vertex weight goes up by 1, if $f+1\ge r$ and becomes $s-f$ if $f+1\le r$.  We now iterate the above process until the vertex weight becomes $s$.\hfill
\end{proof}
\begin{lem}  Let $v=(v_1,\ldots,v_{n-1})$ have weight $h(v)=s$.  Then there is a path from $v$ to another vertex of weight $s$ and consisting entirely of letters that are either $x$ or $x+1$ (and at least one letter $x$).
\end{lem}
\begin{proof}  Type $A$ letters are those in the set $\{0,1,\ldots,x\}$, while type $B$ letters belong to the set $\{x+1,\ldots,k-1\}$.  Let $a$ and $b$ be the desired number of $x$s and $x+1$s.  Depending on how many type $A$ and type $B$ letters $v$ has, we will need the path from $v$ to decrease a certain number of type $B$ letters to either $x$ or $x+1$, and to increase some or all of the type $A$ letters to either $x$ or $x+1$.  Start with $v_1$.  If $v_1$ is a type $A$ letter we change it to either $x$ or $x+1$ as needed.  This possibly increases the vertex and edge weights, and the condition $t\ge s+(k-1)$ keeps both weights legal.  (The weights don't have to increase since we may replace the $x$ with an $x$, causing no change in vertex weight; or $x$ might equal 0, in which case there is no change in edge weight.)  If $v_1$ is of type $B$ we again change it to either $x$ or $x+1$, causing a possible drop in vertex weight.  We now repeat the process with the first letter $v_2$ of the new vertex $v'$ {\it as long as the new vertex weight $h(v')$ satisfies}   $t-h(v')\ge x$ or $h(v')-(s-(k-1))\ge x$.  No edge traversal in the digraph is undertaken that leads to a vertex that is closer than $x$ in weight from the two extreme vertex weights, namely $s-(k-1)$ and $t$.  If such a ``dangerous"  occurrence is imminent, we abort such a move, cycling instead until we have the opportunity to increase a ``dangerously low weight" or decrease a ``dangerously high vertex weight."  The process is repeated until we have the required numbers $a,b$ of symbols $x$ and $x+1$ respectively.  An example is given after the proof of Corollary 3.
\hfill\end{proof}
\begin{lem}  Let $v$ have weight $h(v)=s$ and be composed entirely of $x$s and $x+1$s as in Lemma 3.  Then there is a path from $v$ to the sink vertex $SV=(x,x,\ldots,x,x+1,\ldots,x+1)$ having the same number of $x$s and $x+1$s as $v$.
\end{lem}
\begin{proof}  The proof is similar to that of Lemma 5.  We first identify both $x$s and $x+1$s in $v$ that are ``out of place".  Clearly the number of out of place $x$s must equal the number of out of place $x+1$s, and the latter all appear before the former.  We cycle the letters of the word until we arrive at the first out of place $x+1$, delete it, and add an $x$ at the end.  This decreases the vertex weight by one.  We continue cycling the word and replacing $x+1$s by $x$s until the vertex weight is within $x$ of the minimum legal weight, i.e. $s-(k-1)$.  The next phase is to increase the vertex weight by replacing (cyclically) out of place $x$s by $x+1$s, until the vertex weight is within $x$ of the maximum allowable, i.e. $t$.  We  alternate this process until the two letters are all cyclically in the right places, and then cycle until we reach $SV$.
\end{proof}
\noindent Lemmas 3, 4, 5, and 6 together complete the proof of Theorem 2.\hfill\end{proof}
We define a {\it Redundant de Bruijn Cycle} of a collection of $\ca$ combinatorial objects  to be  a cycle of length $\ca'>\ca$ so that each object appears exactly once as a set of consecutive elements in the cycle, and thus so that $\ca'-\ca$ consecutive elements are redundant objects of another type.
\begin{cor}  For each $t\ge k-1$, there exists a redundant de Bruijn cycle of the $\ca(n,k,t)$ $n$-letter weight $t$ words over $\{0,1,\ldots,k-1\}$ that is of length $\ca'(n,k,t)=\ca(n,k,t)(1+o(1)); k,t$ fixed, $n\to\infty$.
\end{cor}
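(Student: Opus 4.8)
The plan is to read the redundant cycle straight off of Theorem 2, taking the weight window $[s,t]$ as narrow as the hypotheses allow, and then to show that the extra, lower-weight words make up a vanishing fraction of the total length.

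First I would set $s=t-(k-1)$. Since $t\ge k-1$ we get $s\ge 0$, and since we may assume $k\ge 2$ (Theorem 2 is vacuous for $k=1$, where $s<s+k-1$ fails) we have $s<s+k-1=t$; finally $s+k-1=t\le n(k-1)$ for all $n$ large with $k,t$ fixed. Thus the hypotheses $0\le s<s+k-1\le t\le n(k-1)$ hold, and Theorem 2 supplies a de Bruijn cycle of $\cw$, the collection of all $n$-letter words over $\{0,\ldots,k-1\}$ of weight between $s$ and $t$. I would then observe that this single cycle is exactly a redundant de Bruijn cycle for the weight-$t$ words: in it every word of weight in $[s,t]$ occurs exactly once as a length-$n$ window, so in particular every weight-$t$ word occurs exactly once, while the windows of weight $s,s+1,\ldots,t-1$ are the redundant objects ``of another type.'' Its length is therefore
\[
\ca'(n,k,t)=\sum_{w=s}^{t}\ca(n,k,w)=\ca(n,k,t)+\sum_{w=t-(k-1)}^{t-1}\ca(n,k,w),
\]
and it only remains to prove that the tail sum is $o(\ca(n,k,t))$.

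The remaining step is an elementary asymptotic estimate comparing the growth in $n$ of $\ca(n,k,w)$ for different fixed weights $w$. For fixed $k\ge 2$ and fixed $w$, $\ca(n,k,w)$ is the coefficient of $z^{w}$ in $(1+z+\cdots+z^{k-1})^{n}$, which I would bound above by the number of nonnegative integer solutions of $w_1+\cdots+w_n=w$ (dropping the upper bound $w_i\le k-1$ only increases the count), namely $\binom{n+w-1}{w}=\Theta(n^{w})$. For the denominator I would bound $\ca(n,k,t)$ below by counting only words that use the digits $0$ and $1$ with exactly $t$ ones, giving $\ca(n,k,t)\ge\binom{n}{t}=\Theta(n^{t})$ (valid since $k\ge 2$ and $n\ge t$). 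Hence for each $w$ with $s\le w\le t-1$,
\[
\frac{\ca(n,k,w)}{\ca(n,k,t)}\le\frac{\binom{n+w-1}{w}}{\binom{n}{t}}=O\!\left(n^{\,w-t}\right)\longrightarrow 0,
\]
since $w-t\le -1$. Summing the finitely many ($k-1$) such terms gives $\ca'(n,k,t)=\ca(n,k,t)(1+o(1))$ as $n\to\infty$ with $k,t$ fixed.

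I do not expect a genuine obstacle: the construction is immediate from Theorem 2 and the only content is the counting comparison. The single point requiring care is the choice $s=t-(k-1)$, which is the largest admissible $s$ and hence produces the shortest possible tail of redundant weights $t-(k-1),\ldots,t-1$; it is precisely this tail that the degree-in-$n$ comparison ($n^{w}$ against $n^{t}$) annihilates. A smaller $s$ would merely append further lower-order terms, so the conclusion would persist, but the window $[t-(k-1),t]$ is the natural and most efficient one.
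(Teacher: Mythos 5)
Your proposal is correct and follows essentially the same route as the paper: set $s=t-(k-1)$, invoke Theorem 2 to get a de Bruijn cycle of all words of weight in $[t-(k-1),t]$, and observe that the lower-weight windows are the redundant objects, so the length is $\sum_{j=t-(k-1)}^{t}\ca(n,k,j)=\ca(n,k,t)(1+o(1))$. The only difference is that you explicitly justify the asymptotic step with the bounds $\ca(n,k,w)\le\binom{n+w-1}{w}=\Theta(n^w)$ and $\ca(n,k,t)\ge\binom{n}{t}=\Theta(n^t)$, whereas the paper simply asserts that $\ca(n,k,j)/\ca(n,k,j+1)\to0$ as $n\to\infty$; your version is a welcome filling-in of that detail.
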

\begin{proof}  We set $s=t-(k-1)\ge0$ in Theorem 2, and obtain a de Bruijn cycle of all words with weight between $t-(k-1)$ and $t$.  The length of this cycle is
\[\sum_{j=t-(k-1)}^t \ca(n,k,j)=\ca(n,k,t)(1+o(1)),\]
since for fixed $t$ and large $n$, $\ca(n,k,j)$ is a monotone function of $j$; $0\le j\le t$ with $\ca(n,k,j)/\ca(n,k,j+1)\to0,\ n\to\infty$.  This completes the proof.
\hfill\end{proof}

\medskip

\noindent {\bf An Example:  The Algorithm in Lemma 5 in Action.}  We choose $v$ to be $(0,0,0,2,2,5,5,5,3,3)$; $s=25, t=30, k=6, n=11$. The target vertex is $SV=(2,2,2,2,2,3,3,3,3,3)$.  Legal vertex weights are between 20 and 30, and legal edge weights are between 25 and 30.  We declare a dangerous situation to be one in which further progress would lead to vertex weights of 20, 21, 29, or 30 -- unless we cycle to a vertex with appropriate first letter.  Below, red numbers represent vertex weights, blue numbers represent edge weights, and the symbol D represents ``Danger."  We proceed as follows:
\begin{center}
{0,0,0,2,2,5,5,5,3,3} {\color{red} 25}\\
$\downarrow$ {\color{blue} 28} \\
{0,0,2,2,5,5,5,3,3,3} {\color{red} 28, D}\\
$\downarrow$ {\color{blue} 28} \\
{0,2,2,5,5,5,3,3,3,0} {\color{red} 28, D}\\
$\downarrow$ {\color{blue} 28} \\
{2,2,5,5,5,3,3,3,0,0} {\color{red} 28, D}\\
$\downarrow$ {\color{blue} 30} \\
{2,5,5,5,3,3,3,0,0,2} {\color{red} 28, D}\\
$\downarrow$ {\color{blue} 30} \\
{5,5,5,3,3,3,0,0,2,2} {\color{red} 28}\\
$\downarrow$ {\color{blue} 30} \\
{5,5,3,3,3,0,0,2,2,2} {\color{red} 25}\\
$\downarrow$ {\color{blue} 27} \\
{5,3,3,3,0,0,2,2,2,2} {\color{red} 22, D}\\
$\downarrow$ {\color{blue} 27} \\
{3,3,3,0,0,2,2,2,2,5} {\color{red} 22, D}\\
$\downarrow$ {\color{blue} 25} \\
{3,3,0,0,2,2,2,2,5,3} {\color{red} 22, D}\\
$\downarrow$ {\color{blue} 25} \\
{3,0,0,2,2,2,2,5,3,3} {\color{red} 22, D}\\
$\downarrow$ {\color{blue} 25} \\
{0,0,2,2,2,2,5,3,3,3} {\color{red} 22}\\
$\downarrow$ {\color{blue} 25} \\
{0,2,2,2,2,5,3,3,3,3} {\color{red} 25}\\
$\downarrow$ {\color{blue} 28} \\
{2,2,2,2,5,3,3,3,3,3} {\color{red} 28, D}\\
$\downarrow$ {\color{blue} 30} \\
{2,2,2,5,3,3,3,3,3,2} {\color{red} 28, D}\\
$\downarrow$ {\color{blue} 30} \\
{2,2,5,3,3,3,3,3,2,2} {\color{red} 28, D}\\
$\downarrow$ {\color{blue} 30} \\
{2,5,3,3,3,3,3,2,2,2} {\color{red} 28, D}\\
$\downarrow$ {\color{blue} 30} \\
{5,3,3,3,3,3,2,2,2,2} {\color{red} 28}\\
$\downarrow$ {\color{blue} 30} \\
{3,3,3,3,3,2,2,2,2,2} {\color{red} 25}\\
$\downarrow$ {\color{blue} 28} \\
{3,3,3,3,2,2,2,2,2,3} {\color{red} 25}\\
$\downarrow$ {\color{blue} 28} \\
{3,3,3,2,2,2,2,2,3,3} {\color{red} 25}\\
$\downarrow$ {\color{blue} 28} \\
{3,3,2,2,2,2,2,3,3,3} {\color{red} 25}\\
$\downarrow$ {\color{blue} 28} \\
{3,2,2,2,2,2,3,3,3,3} {\color{red} 25}\\
$\downarrow$ {\color{blue} 28} \\
{2,2,2,2,2,3,3,3,3,3} {\color{red} 25}
\end{center}
thus arriving at the sink vertex.

\section{Further Research}  It would be interesting to improve Theorem 2 so that the range of the weight of the word is as small as possible, or else to prove that the range in Theorem 2 is the best possible.   Also, one might ask how and to what extent one can show existence of de Bruijn cycles for unordered posets.  Last but not least, can results be proved for (labelled as well as unlabelled) subposets of mother posets other than the Boolean Lattice?

\section{Acknowledgments} The research of AG was supported by NSF Grant 1004624.  

\end{document}